\documentclass[a4paper,11pt]{article}
\usepackage{amsfonts,amsmath,amsthm,amssymb,graphicx,epsfig}
\usepackage[latin1]{inputenc}
\usepackage{hyperref}
\newtheorem{prop}{Proposition}
\newtheorem{theorem}{Theorem}

\newtheorem{lem}{Lemma}
\newtheorem{defi}{Definition}

\newcounter{mycount}
\newenvironment{romlist}{\begin{list}{\rm(\roman{mycount})}%
   {\usecounter{mycount}\labelwidth=1cm\itemsep -0pt}}{\end{list}}

\title{Bipartite Stable Poisson graphs on $\mathbb{R}$ }
\author{Maria Deijfen\and Fabio Marcellus Lopes}
\date{February 2012}

\begin{document}

\maketitle

\begin{abstract}
Let red and blue points be distributed on $\mathbb{R}$ according to two independent Poisson processes $\mathcal{R}$ and $\mathcal{B}$ and let each red (blue) point independently be equipped with a random number of half-edges according to a probability distribution $\nu$ ($\mu$). We consider translation-invariant bipartite random graphs with vertex classes defined by the point sets of $\mathcal{R}$ and $\mathcal{B}$, respectively, generated by a scheme based on the Gale-Shapley stable marriage for perfectly matching the half-edges. Our main result is that, when all vertices have degree 2 almost surely, then the resulting graph does not contain an infinite component. The two-color model is hence qualitatively different from the one-color model, where Deijfen, Holroyd and Peres have given strong evidence that there is an infinite component. We also present simulation results for other degree distributions.

\noindent
\vspace{0.3cm}

\noindent \emph{Keywords:} Poisson process, random graph, degree distribution, percolation, matching.

\vspace{0.2cm}

\noindent AMS 2000 Subject Classification: 60D05, 05C70, 05C80.

\end{abstract}

\section{Introduction}

Let $\mathcal{R}$ and $\mathcal{B}$ be independent homogeneous Poisson processes on $\mathbb{R}$, with finite intensities $\lambda_{\mathcal{R}}$ and $\lambda_{\mathcal{B}}$, respectively. Furthermore, let $\nu$ and $\mu$ be probability laws on the strictly positive integers and $X$ and $Y$ random variables with laws $\nu$ and $\mu$, respectively. We shall study translation invariant bipartite simple graphs whose vertices are the points in $\mathcal{R}$ and $\mathcal{B}$ and where, conditionally on $\mathcal{R}$ and $\mathcal{B}$, the degrees of the vertices are i.i.d.\ with laws $\nu$ and $\mu$, respectively. In \cite{LL}, it is shown that such graphs exist if and only if
\begin{equation}\label{des}
\lambda_{\mathcal{R}} \mathbb{E}[X]= \lambda_{\mathcal{B}} \mathbb{E}[Y].
\end{equation}
\noindent Here we focus on the question on whether there is an infinite component or not in a particular type of such graphs referred to as stable graphs. Our main result is that, if all degrees are almost surely equal to 2, then all components are finite almost surely. This implies that the percolation properties of the model are different compared to the case with a single Poisson process, where there are strong indications that there is an infinite component when all vertices have degree 2; see \cite{Mia5}. We also present simulation results for other degree distributions.

First we formally define the objects that we will work with. The \textbf{support} (or point-set) of the process $\mathcal{R}$ is the random set $[\mathcal{R}]:= \{ x \in \mathbb{R}^d : \mathcal{R}(\{x\})>0\}$ and its points are called \textbf{red points}. Analogously, $[\mathcal{B}]$ denotes the point-set of the process $\mathcal{B}$, and its points are called \textbf{blue points}. In general, for a random point measure $\Lambda$, we write $[\Lambda]$ for its support. Let $\eta_{\mathcal{R}}$ be a random integer-valued measure on $\mathbb{R}^d$, with the same support as $\mathcal{R}$, and which, conditionally on $\mathcal{R}$, assigns i.i.d.\ values with law $\nu$ to the elements of $[\mathcal{R}]$. Similarly, for $\mathcal{B}$, let $\eta_{\mathcal{B}}$ assign i.i.d.\ values with law $\mu$ to the elements of $[\mathcal{B}]$. The pairs $(\mathcal{R},\eta_{\mathcal{R}} )$ and $(\mathcal{B} ,\eta_{\mathcal{B}} )$ are independent marked Poisson processes with positive integer-valued marks. For $x \in [\mathcal{R}]$ and $y \in [\mathcal{B}]$, we write $X_{x}$ for $\eta_{\mathcal{R}}(\{x\})$ and $Y_{y}$ for $\eta_{\mathcal{B}}(\{y\})$, and we interpret this as the number of stubs (half-edges) at the red point $x$ and the number of stubs at the blue point $y$, respectively. Sometimes we refer to the stubs as red or blue depending on the color of the point to which they are attached.

A matching scheme for a marked process $(\mathcal{R},\eta_{\mathcal{R}})$ is a point process $\mathcal{M}$ on the space of unordered pairs of points in $\mathbb{R}^d$, with the property that a.s.\ for every pair $(x,y)\in \mathcal{M}$ we have that $x,y\in \mathcal{R}$, and such that in the graph with vertex set $[\mathcal{R}]$ and edge set $[\mathcal{M}]$, each vertex $x$ has degree $X_{x}$. We refer to this as a \textbf{one-color multi-matching scheme}. A matching scheme for two marked processes $(\mathcal{R},\eta_{\mathcal{R}} )$ and $(\mathcal{B} ,\eta_{\mathcal{B}} )$ is a point process $\mathcal{M}$ on the space of unordered pairs of points in $\mathbb{R}^d$ with the property that almost surely, for every unordered pair $(x,y)\in [\mathcal{M}]$, we have $x\in [\mathcal{R}]$ and $y\in [\mathcal{B}]$, and such that in the bipartite graph with vertex set $[\mathcal{R}]\cup[\mathcal{B}]$ and edge set $[\mathcal{M}]$, each vertex $x \in [\mathcal{R}]$ and each vertex $y \in [\mathcal{B}]$ has degree $X_{x}$ and $Y_{y}$, respectively. We refer to this as a \textbf{two-color multi-matching scheme}. If a vertex $x \in [\mathcal{R}]$ (or $y \in [\mathcal{B}]$) has degree at most $X_{x}$ (or $Y_{y}$), we talk about a \textbf{two-color partial multi-matching}. We only consider matching schemes $\mathcal{M}$ that are \textbf{simple} (that is, the resulting graph has no self-loops and no multiple edges) and \textbf{translation invariant} (that is, $\mathcal{M}$ is invariant in law under the action of all translations of $\mathbb{R}$). It is shown in \cite{LL} that a two-color multi-matching scheme with these properties exist if and only if (\ref{des}) holds, including also the case when $\mathbb{E}[X]=\mathbb{E}[Y]=\infty$.

One-color multi-matchings have previously been studied in \cite{Mia,Mia2,Mia5} and generalize one-color matchings, which is the particular case in which each vertex has degree 1; see \cite{Yuval1}. Two-color matchings have been studied in \cite{Yuval1} and were generalized to multi-matchings in \cite{LL}. The focus in this paper is on a particular type of two-color multi-matching referred to as stable multi-matching, inspired by Gale-Shapley stable marriage \cite{Gale}. Here and throughout, $|.|$ denotes the Euclidean norm on $\mathbb{R}$.

\begin{defi}
A one-color (two-color) multi-matching is said to be a \textbf{stable multi-matching} if almost surely, for any two distinct points $x,y$ (of different color), either they are linked by an edge or at least one of $x$ and $y$ has no incident edges longer than $|x-y|$.
\end{defi}

For the two-color case it was proved in \cite{LL}[Proposition 1] that, if both $\nu$ and $\mu$ have finite means such that (\ref{des}) holds, then there is an a.s.\ unique stable two-color multi-matching that can be obtained from the following procedure. Given the two point configurations $[\mathcal{R}]$ and $[\mathcal{B}]$, call two points $x$ and $y$ compatible if they are of different color and say that two compatible points $x$ and $y$ are mutually closest if $x$ is the closest compatible point of $y$ and vice versa. First create an edge between each pair of mutually closest compatible points in $[\mathcal{R}]\cup[\mathcal{B}]$ and remove one stub from each of these points. In the next step, consider the set of points which still have at least one stub on them after the previous step. Call two such points compatible if they are of different color and if no edge was created between them in the previous step. Create an edge between each pair of compatible mutually closest points, and remove one stub from each of these points. Iterate the algorithm indefinitely. The procedure is a straightforward generalization of the analogue construction for the one-color model.

In \cite{Mia5} it was shown for the one-dimensional one-color model that an infinite component is unique. Our first result generalizes this to the two-color case.

\begin{prop}\label{unique}
For two Poisson processes on $\mathbb{R}$ with intensities $\lambda_{\mathcal{R}}$ and $\lambda_{\mathcal{B}}$ and any two degree distributions satisfying (\ref{des}), in the two-color stable multi-matching, there is at most one infinite component.
\end{prop}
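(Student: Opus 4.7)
The plan is to combine ergodicity of the marked Poisson input with the one-dimensional geometry of $\mathbb{R}$ and the stability of the matching. First, since the stable two-color multi-matching is a.s.\ unique (\cite{LL}) and hence a translation-equivariant measurable functional of the ergodic marked point process $(\mathcal{R}, \eta_{\mathcal{R}}, \mathcal{B}, \eta_{\mathcal{B}})$, the number $N$ of infinite components is a translation-invariant random variable and is therefore a.s.\ equal to some constant $k \in \{0, 1, 2, \ldots, \infty\}$. Suppose for contradiction that $k \geq 2$.

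Because $[\mathcal{R}] \cup [\mathcal{B}]$ is locally finite, every infinite component must extend to $+\infty$ or $-\infty$ (or both), for otherwise it would be a bounded discrete set and hence finite. The case $k=2$ with the two infinite components extending in opposite directions is ruled out by a translation-equivariance argument: the rightmost vertex of the leftward-extending component would be a translation-equivariant random point of $\mathbb{R}$, and its distribution would then have to be invariant under all translations of $\mathbb{R}$, which is impossible for a probability measure. In every remaining case, pigeonhole supplies two distinct infinite components $C_1$ and $C_2$ that extend in the same direction, which I may take without loss of generality to be $+\infty$.

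For a large $t$, let $u_i$ denote the leftmost vertex of $C_i$ in $[t,\infty)$, and assume $u_1<u_2$. Since $C_1$ extends to $+\infty$ past $u_2$, it must contain an edge $(a',b')$ straddling $u_2$, with $a'<u_2<b'$. The point $u_2\in C_2$ is not connected to either $a'$ or $b'$, and the endpoints $a',b'$ are of opposite colors; applying the stability condition to $u_2$ together with whichever of $a',b'$ has the opposite color to $u_2$ forces every edge incident to $u_2$ to have length at most the corresponding side of the gap, i.e.\ at most $u_2-a'$ or $b'-u_2$. A symmetric consideration, swapping the roles of the two components, constrains the edges at $u_1$ in an analogous way.

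The hard part is to convert these pointwise short-edge restrictions into a global contradiction with $C_2$ extending to $+\infty$. I would try either to iterate along the successive leftmost $C_2$-vertices past each straddling $C_1$-edge, showing by a density argument (using the positive a.s.\ intensity of $C_2$-vertices from ergodicity) that the accumulated short-edge constraints are incompatible with $C_2$ reaching $+\infty$, or to set up a mass-transport in which each $C_2$-vertex sends a unit of mass to the $C_1$-edge straddling it; translation invariance of the transport would then force the density of $C_2$-vertices to vanish, contradicting the assumption that $C_2$ is an infinite component. This is the step that requires real care, because in the bipartite case edges of the stable matching do not enjoy the non-crossing property used in the one-color case of \cite{Mia5}, so one must keep track of the color pattern at each application of stability.
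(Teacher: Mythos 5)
Your setup (ergodicity, the observation that an infinite component must reach $+\infty$ or $-\infty$, ruling out the opposite-direction scenario by an equivariant-point argument, and the stability deduction that all edges at $u_2$ must be shorter than the distance to the opposite-colored endpoint of a straddling $C_1$-edge) is sound, but the proof is not complete: you stop exactly at the step you yourself flag as ``the hard part,'' and that step is the actual content of the proposition. The paper closes this gap with a short but essential lemma that you do not have: if two edges $(a,b)$ and $(c,d)$ of the stable multi-matching cross (i.e.\ $a<c<b<d$), then one of the pairs $(c,b)$, $(b,d)$ or $(a,c)$ is forced by stability to be an edge, so crossing edges always lie in the \emph{same} component. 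The proof is a two-case analysis on the color pattern of the crossing: if $c$ and $b$ have different colors they desire each other and must be matched; if (say) $a$ is blue and $c$ is red, then $a$ desires $c$ and $d$ desires $b$, and comparing $b-a$ with $d-b$ forces either $(b,d)$ or $(a,c)$ into the matching. Combined with the fact (proved by mass transport) that any infinite component is unbounded in \emph{both} directions with respect to both colors, two distinct infinite components would necessarily have crossing edges, a contradiction. This crossing lemma is precisely the bipartite substitute for the non-crossing property of the one-color model that you correctly note is unavailable.

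Neither of your two proposed ways to finish repairs this. The mass transport in which each $C_2$-vertex sends unit mass to the $C_1$-edge straddling it is a perfectly legitimate transport with finite expected mass both out of and into the unit interval (the vertices of $[\mathcal{R}]\cup[\mathcal{B}]$ have finite intensity and each straddling edge receives only the mass of the finitely many $C_2$-points it covers), so it yields no contradiction and certainly does not force the density of $C_2$ to vanish. The iteration along successive leftmost $C_2$-vertices is only a plan: the pointwise conclusion ``all edges at $u_2$ are short'' is locally consistent with $C_2$ being infinite, and without something like the crossing lemma there is no mechanism that turns these local constraints into a global obstruction. Note also that you only rule out the single configuration of two oppositely-directed components, whereas the paper's Lemma on unboundedness in both directions (again via mass transport: all points of a component bounded on one side send mass $1$ to its extreme point, so a unit interval would receive infinite mass with positive probability while sending out expected mass at most $\lambda_{\mathcal{R}}+\lambda_{\mathcal{B}}$) is what makes the existence of crossing edges between two infinite components immediate; with only same-direction unboundedness, even forcing a crossing requires extra work.
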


As for percolation properties, it was proved in \cite{LL} that the two-color stable multi-matching almost surely does not generate an infinite component when the only possible degrees for both processes are 1 and 2, with a strictly positive probability of degree 1 for at least one of the processes. Furthermore, for $d\geq 2$, it was shown that there is an integer $k=k(d)$ such that, if all vertices almost surely have degree at least $k$, then there is almost surely an infinite component. These results are generalizations of analogue results for the one-color case. For the one-color case there are strong indications that the case with constant degree 2 does generate an infinite component: In \cite{Mia} it is shown that almost sure existence of an infinite component follows from the assumption that a certain finite event has large enough probability, and the assumption is strongly supported by computer simulations. Our main result is that the two-color model with constant degree 2 almost surely does not give rise to an infinite component. The model is hence qualitatively different from the one-color version in this respect.

\begin{theorem}\label{noinf}
Almost surely, there is no infinite component in the two-color stable multi-matching with $\nu(\{2\})=\mu(\{2\})=1$.
\end{theorem}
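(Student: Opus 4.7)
The plan is a proof by contradiction. Assume an infinite component exists; by Proposition~\ref{unique} it is almost surely unique, call it $C$. Since all vertices have degree $2$ and the graph is bipartite, each component is either a finite cycle of even length or a bi-infinite alternating red-blue path. Hence $C=\ldots v_{-1}v_0v_1\ldots$ is a bi-infinite alternating path.

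The first step is to extract a structural \emph{exclusion principle} from stability: if $(r_1,b_1)$ and $(r_2,b_2)$ are two distinct edges of the stable multi-matching with $r_1<r_2<b_1<b_2$, then $(r_2,b_1)$ must also be an edge. Otherwise the compatible pair $(r_2,b_1)$ would violate stability, since $r_2$ has the incident edge $(r_2,b_2)$ of length $b_2-r_2>|r_2-b_1|$ and $b_1$ has the incident edge $(r_1,b_1)$ of length $b_1-r_1>|r_2-b_1|$. This principle constrains how the edges of $C$ may interleave with each other and with the edges of the finite cycles nearby.

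The next step is to exploit the bipartite $2$-regular structure. Every finite cycle is colour-balanced, and $C$ itself alternates colour along the path. Combined with the exclusion principle, these facts should yield a bounded-excess identity of the form
\[
|\mathcal R\cap(-\infty,v_{n_k}]|-|\mathcal B\cap(-\infty,v_{n_k}]|\in\{-1,0,1\}
\]
for a suitable monotone subsequence $(v_{n_k})$ of path vertices tending to $+\infty$. This is incompatible with the fact that the underlying Skellam-L\'evy random walk $x\mapsto|\mathcal R\cap(-\infty,x]|-|\mathcal B\cap(-\infty,x]|$, being mean-zero and recurrent, has almost surely unbounded fluctuations as $x\to+\infty$.

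The \textbf{main obstacle} is turning this heuristic into a rigorous contradiction. Because the path vertices $(v_n)$ are themselves functions of the Poisson data through the Gale-Shapley construction, the subsequence $(v_{n_k})$ cannot be chosen independently of the random walk. The natural resolution is a translation-invariant mass-transport or Palm-calculus argument: each Poisson point transports unit mass to the nearest vertex of $C$, and the translation invariance of the stable multi-matching forces an identity between inflows and outflows that, combined with the bounded-excess identity above, contradicts the unbounded fluctuations of the excess process.
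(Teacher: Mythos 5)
You are attempting a genuinely different route from the paper (a color-count/random-walk contradiction rather than the paper's ``desire count'' argument), but as written it has two concrete gaps. First, the bounded-excess identity is the heart of your plan and it is exactly what you have not proved; it does not follow from the exclusion principle you state. Your principle only treats crossings whose inner pair have \emph{opposite} colors, which is the easy case where stability immediately forces the chord. To control the red/blue excess to the left of a cut vertex of the infinite component $C$ you must control how the edges of $C$ interleave on $\mathbb{R}$: that two crossing edges of $C$ cannot have inner endpoints of the same color, that consequently only a bounded number of edges of $C$ cross any given location, and that every finite cycle lies entirely between two consecutive vertices of $C$ (so a cut at a vertex of $C$ never splits a finite, color-balanced cycle). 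These facts are true in the degree-2 case but require real work --- in the paper they are Lemma \ref{cross} and the structural Lemmas \ref{samba}--\ref{comp}, proved by iterating a ``desire'' argument to force infinitely many points into a bounded interval. Without them, the $C$-vertices left of a cut point need not decompose into boundedly many alternating subpaths, and the excess is not obviously bounded.

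Second, even granting the bounded-excess identity, your contradiction is not yet there, and the fix you sketch does not supply it. A mean-zero recurrent walk has unbounded fluctuations but is bounded along many subsequences (e.g.\ its returns to $0$), so boundedness of the excess along the data-dependent sequence $v_{n_k}$ is not by itself contradictory, as you acknowledge; however, transporting unit mass from each Poisson point to the nearest vertex of $C$ yields no contradiction --- that transport has finite expected in- and outflow and is precisely how one would show $C$ has positive intensity. A correct finish needs something quantitative, for instance: the vertices of $C$ have positive density among all Poisson points (ergodic theorem plus positive intensity), whereas a null-recurrent $\pm 1$ walk spends an asymptotically vanishing fraction of its steps in any bounded band, so the excess cannot be confined to a bounded band on a positive-density set of points. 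That step is missing. For comparison, the paper takes a different path: using the same structural lemmas plus a mass-transport lemma it shows that percolation would force the number $N$ of blue points desiring the origin to be finite, and separately shows $N=\infty$ a.s.\ for any degree distributions via a monotonicity (``add an extra red point'') argument. Either way, the structural analysis you skipped is unavoidable, and your concluding step must be replaced by a genuine density/null-recurrence argument or by the paper's $N$-based argument.
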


The rest of the paper is organized as follows. Proposition \ref{unique} is proved in Section 2. In Section 3, we establish a consequence of percolation, more specifically, we show that almost sure existence of an infinite component in the two-color model with constant degree 2 implies that the number $N$ of vertices that would prefer to be matched to a point at the origin (in a sense defined in Section 2) is finite almost surely. Then, in Section 4, we show that in fact $N=\infty$ almost surely for any two degree distributions in the two-color model, which proves Theorem \ref{noinf}. Finally, in Section 5, simulation results for other degree distributions are presented along with some suggestions for further work.

\section{Proof of Proposition \ref{unique}}

Proposition \ref{unique} is proved along the same lines as the analog one-color result. We say that two edges $(a,b)$ and $(c,d)$ in $[\mathcal{M}]$ \textbf{cross} each other if $a<c<b<d$. Proposition \ref{unique} follows by combining the fact that two edges that cross each other in the two-color stable multi-matching on $\mathbb{R}$ must belong to the same component (Lemma \ref{unbo}) with the fact that an infinite component must be unbounded in both directions in any translation-invariant matching scheme (Lemma \ref{cross}). The latter follows from a version of the mass transport principle, which will also be used later. 

A \textbf{mass transport} is a random measure $T$ on $(\mathbb{R}^d)^2$ that is invariant in law under translations of $\mathbb{R}^d$. For Borel sets $A,B\subset \mathbb{R}^d$, we interpret $T(A,B)$ as the amount of mass transported from $A$ to $B$. Write $Q$ for the unit cube $[0,1)^d$.

\begin{lem}[Mass Transport Principle]\label{masst} Let $T$ be a mass transport. Then
\[ \mathbb{E}T(Q,\mathbb{R}^d) = \mathbb{E}T(\mathbb{R}^d,Q). \]
\end{lem}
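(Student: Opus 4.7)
My plan is to exploit the translation invariance of $T$ by partitioning $\mathbb{R}^d$ into unit cubes indexed by $\mathbb{Z}^d$, and then to swap the roles of source and target via a reindexing. Concretely, write $\mathbb{R}^d = \bigsqcup_{z\in\mathbb{Z}^d}(Q+z)$ as a countable disjoint union of translates of the unit cube, so that by countable additivity of $T$ (applied separately in each coordinate of $(\mathbb{R}^d)^2$) and by Tonelli's theorem applied to the non-negative integrand,
\[ \mathbb{E}T(Q,\mathbb{R}^d) = \sum_{z\in\mathbb{Z}^d}\mathbb{E}T(Q,Q+z), \qquad \mathbb{E}T(\mathbb{R}^d,Q) = \sum_{z\in\mathbb{Z}^d}\mathbb{E}T(Q+z,Q). \]

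The second step is to use translation invariance: for each fixed $z\in\mathbb{Z}^d$, the law of $T$ is invariant under the diagonal shift $(x,y)\mapsto(x-z,y-z)$, and applying this shift to the set $Q\times(Q+z)$ produces $(Q-z)\times Q$. Hence $\mathbb{E}T(Q,Q+z) = \mathbb{E}T(Q-z,Q)$. Substituting this into the first sum and reindexing $z\mapsto -z$ over $\mathbb{Z}^d$ yields $\sum_{z\in\mathbb{Z}^d}\mathbb{E}T(Q+z,Q)$, which is exactly $\mathbb{E}T(\mathbb{R}^d,Q)$.

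The main obstacle I anticipate is largely bookkeeping rather than a genuine difficulty: I need to be careful that translation invariance of $T$ as a random measure on $(\mathbb{R}^d)^2$ really does give equality of the expectations $\mathbb{E}T(A,B) = \mathbb{E}T(A-z,B-z)$ for arbitrary Borel $A,B$, and that the interchange of expectation and countable sum is legitimate. Both points are handled by the non-negativity of $T$ (so Tonelli applies without integrability hypotheses) together with the standard fact that invariance in law of a random measure implies equality of first moments on translated Borel sets. Once these are in place, the argument reduces to the partition-plus-reindex calculation above.
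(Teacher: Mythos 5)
Your argument is correct and is essentially the standard proof of the mass transport principle that the paper invokes by citation (the partition of $\mathbb{R}^d$ into integer translates of $Q$, Tonelli for the non-negative measure, and diagonal translation invariance giving $\mathbb{E}T(Q,Q+z)=\mathbb{E}T(Q-z,Q)$, followed by reindexing). No gaps; the non-negativity remark correctly disposes of the only measure-theoretic issue, and the identity holds even when both sides are infinite.
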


\noindent For a proof, see e.g.\ \cite{Mia5}[Lemma 2.2].

We say that an infinite component in a translation-invariant two-color multi-matching scheme is unbounded to the right (left) with respect to the red (blue) points if, for any $r\in\mathbb{R}^+$, it contains infinitely many red (blue) points to the right (left) of $r$ ($-r$).

\begin{lem}\label{unbo}
An infinite component in a translation-invariant two-color multi-matching scheme is almost surely unbounded in both directions with respect to both colors.
\end{lem}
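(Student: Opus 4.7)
My plan is to argue by contradiction via the Mass Transport Principle (Lemma \ref{masst}), following the standard strategy for ruling out extremal points in translation-invariant random structures on $\mathbb{R}$. By spatial reflection and by swapping the roles of red and blue, it will suffice to establish one of the four claims---for concreteness, that almost surely no infinite component has a rightmost red point. The remaining three statements (leftmost red, rightmost blue, leftmost blue) will follow by the same argument applied after reflection and/or colour interchange.

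Before running the transport, I would first record the simple observation that any infinite component $C$ contains infinitely many red points \emph{and} infinitely many blue points. Indeed, the graph is bipartite and every vertex has finite degree almost surely (since $\nu$ and $\mu$ are supported on the positive integers), so a component with only finitely many vertices of one colour would carry only finitely many edges and hence would be finite. Consequently, if the red points of $C$ are bounded above in $\mathbb{R}$, then---because $[\mathcal{R}]$ is locally finite---the component admits a well-defined rightmost red point, which I will denote $\rho(C)$.

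Now assume, for contradiction, that with positive probability some infinite component possesses a rightmost red point. I will build a mass transport $T$ in which each red point $x$ whose component $C$ is infinite and has a rightmost red point sends one unit of mass to $\rho(C)$, while every other point sends none. Translation invariance of $T$ is inherited from that of the matching. On the one hand,
\[
\mathbb{E}T(Q,\mathbb{R}) \;\le\; \mathbb{E}\bigl|[\mathcal{R}]\cap Q\bigr| \;=\; \lambda_{\mathcal{R}} \;<\; \infty,
\]
since each red point emits at most one unit. On the other hand, the random set of rightmost red points of infinite components is a translation-invariant point process on $\mathbb{R}$ that is non-empty with positive probability; hence it has positive intensity, so with positive probability some $\rho(C)$ lies in $Q=[0,1)$. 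On that event, $\rho(C)$ receives one unit from each of the infinitely many red points of $C$, giving $T(\mathbb{R},Q)=\infty$ and thus $\mathbb{E}T(\mathbb{R},Q)=\infty$. This contradicts Lemma \ref{masst}.

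The step I expect to require the most care is the preliminary observation that both colour classes in an infinite component are infinite, since without this the ``rightmost red point'' might receive only a finite amount of mass and the contradiction in the final step would collapse. Once that is in place, the rest is a textbook application of the MTP.
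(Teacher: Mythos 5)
Your proof is correct and follows essentially the same route as the paper: a mass transport in which the component's points send unit mass to the putative rightmost red point, contradicting Lemma \ref{masst}. The only (harmless) difference is that you let only the red points of the component send mass, which is why you need the preliminary observation that an infinite component contains infinitely many points of each colour, whereas the paper has every point of the component send mass and bounds the outgoing intensity by $\lambda_{\mathcal{R}}+\lambda_{\mathcal{B}}$.
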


\begin{proof}
Just observe that a matching scheme that with positive probability is bounded in one direction with respect to (at least) one of the colors yields a contradiction with the mass transport principle: Assume for instance that an infinite component has a rightmost red point $x$. For the mass transport where each point in the component sends out mass 1 to $x$, we then have that the unit interval receives infinite mass with positive probability, while the expected mass that is sent out is bounded by $\lambda_{\mathcal{R}}+\lambda_{\mathcal{B}}$.
\end{proof}

Next we show that crossing edges in the two-color stable multi-matching cannot belong to different components. For the one-color stable multi-matching this follows immediately from the definition of the matching, but here it requires a proof. For a point $x$ in $[\mathcal{R}]$ or $[\mathcal{B}]$, write $M_x$ for the length of the longest edge incident to $x$ and say that $x$ \textbf{desires} a point $y\in\mathbb{R}$ if $|x-y|<|x-M_x|$. By the definition of the two-color stable multi-matching, if $x\in[\mathcal{R}]$ and $y\in[\mathcal{B}]$ are such that $x$ desires $y$ and $y$ desires $x$, then $x$ and $y$ must have an edge between them.

\begin{lem}\label{cross}
For the two-color stable multi-matching with any two degree distributions, crossings edges belong to the same component.
\end{lem}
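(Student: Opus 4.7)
The plan is a case analysis on the colors of the endpoints of the two crossing edges. Label them $(a,b)$ and $(c,d)$ with $a<c<b<d$. Since the graph is bipartite, each edge joins a red and a blue point; without loss of generality assume $a\in[\mathcal{R}]$, so $b\in[\mathcal{B}]$. Two sub-cases then arise, according to whether $c$ is red (Case~I) or blue (Case~II).

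In Case~I the inner pair $\{b,c\}$ has different colors and is therefore compatible. The ordering $a<c<b<d$ yields $|b-c|<b-a\le M_b$ and $|b-c|<d-c\le M_c$, so $b$ and $c$ desire each other. By the stability consequence recalled just before the lemma, $(b,c)$ must then be an edge of $\mathcal{M}$, and the two crossing edges lie in a common component.

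Case~II is the genuine content of the lemma: here the inner pair $\{b,c\}$ is monochromatic (both blue), so stability cannot be applied directly to it. The plan is to look instead at the two outer compatible pairs $\{a,c\}$ and $\{b,d\}$. From $a<c<b<d$ one reads $|a-c|<b-a\le M_a$ and $|b-d|<d-c\le M_d$, so $a$ desires $c$ and $d$ desires $b$. Suppose for contradiction that neither $(a,c)$ nor $(b,d)$ is in $\mathcal{M}$. Then stability forces $c$ not to desire $a$ and $b$ not to desire $d$, giving respectively $|c-d|\le|a-c|$ and $|a-b|\le|b-d|$. Adding these yields $(d-c)+(b-a)\le(c-a)+(d-b)$, i.e.\ $2(b-c)\le 0$, contradicting $b>c$. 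Hence at least one of $(a,c)$, $(b,d)$ must be present, which again places $(a,b)$ and $(c,d)$ in the same component.

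The expected main obstacle is precisely Case~II. In the one-color model, crossing edges share a component essentially by inspection of the definition, and in Case~I here the same short argument works because the geometrically natural inner pair happens to be compatible. In Case~II that inner pair is monochromatic, so stability has nothing to say about it, and one must instead manufacture a contradiction by combining two potential stability violations at the \emph{outer} compatible pairs. Once this substitution is made, the proof reduces to a one-line addition of the two inequalities produced by the double violation.
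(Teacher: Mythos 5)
Your proof is correct and takes essentially the same route as the paper: split according to the colors of the inner points $c,b$, settle the case where the inner pair is compatible by mutual desire and stability, and handle the monochromatic inner pair via the two outer compatible pairs $\{a,c\}$ and $\{b,d\}$, for which $a$ desires $c$ and $d$ desires $b$. The only difference is cosmetic: in that second case the paper concludes directly by comparing $b-a$ with $d-b$ (one comparison forces the edge $(b,d)$, the other forces $(a,c)$), whereas you assume both edges absent and add the two stability inequalities to get $2(b-c)\le 0$, a contradiction.
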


\begin{proof}
Suppose that the edges $(a,b)$ and $(c,d)$ in $[\mathcal{M}]$ cross each other. By symmetry we only need to consider two cases:
\begin{romlist}
\item Assume that $a$ and $c$ are blue, and that $b$ and $d$ are hence red. Since $b-a>b-c$ and the edge $(a,b)$ is in $[\mathcal{M}]$, we have that $b$ desires $c$. Similarly, since $d-c>b-c$ and the edge $(c,d)$ is in $[\mathcal{M}]$, we also have that $c$ desires $b$. It follows from stability of the matching that $(c,b)$ is in $[\mathcal{M}]$.
\item Assume that $a$ is blue and $c$ is red, so that hence $b$ is red and $d$ is blue. Since $b-a>c-a$ and $(a,b)$ is in $[\mathcal{M}]$, we have that $a$ desires $c$. Analogously, we obtain that $d$ desires $b$. If $b-a>d-b$, then $b$ desires $d$, and the edge $(b,d)$ is in $[\mathcal{M}]$. If $b-a<d-b$, we have that $c-a<b-a<d-b<d-c$, so $c$ desires $a$, and the edge $(a,c)$ is in $[\mathcal{M}]$.
\end{romlist}
\end{proof}

\begin{proof}[Proof of Proposition \ref{unique}]
Uniqueness of the infinite component in the two-color stable multi-matching now follows from Lemma \ref{unbo} and Lemma \ref{cross} by noting that two components that are both unbounded in both directions cannot avoid having crossing edges.
\end{proof}

\section{A consequence of percolation}

For the two-color stable multi-matching, let $N$ denote the number of blue points that desire the origin. In this section we show that, if there is almost surely an infinite component in the case when all degrees are almost surely equal to 2, then $N<\infty$ almost surely. We then show in Section 4 that in fact $N=\infty$ almost surely for any degree distributions, which gives Theorem \ref{noinf}. For the one-color model, it was shown in \cite{Mia5} that existence of an infinite component is equivalent to $N<\infty$ almost surely. That model is strongly believed to percolate, but proving that $N<\infty$ almost surely has so far not been sucessful.

\begin{prop}\label{equi}
Consider the two-color stable multi-matching with $\lambda_{\mathcal{R}}=\lambda_{\mathcal{B}}$ and $\nu(\{2\})=\mu(\{2\})=1$. If almost surely there is an infinite component, then $N<\infty$ almost surely.
\end{prop}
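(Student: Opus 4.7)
My plan is to use a mass-transport identity to reduce the statement to an integrability bound on the maximum edge length at a typical blue point, and then to exploit the bi-infinite-path structure of the a.s.-unique infinite component $C$, together with Lemma \ref{cross}, to obtain that bound.

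For the first step, extend the definition of $N$ by setting $N_p := \#\{y \in [\mathcal{B}] : M_y > |y-p|\}$, so that $N = N_0$; by translation invariance the law of $N_p$ is the same for every $p \in \mathbb{R}$. Consider the mass transport in which each blue point $y$ emits mass at unit density on its desire interval $(y - M_y,\, y + M_y)$, so that the total mass emitted by $y$ is $2 M_y$ and the mass received at a point $p$ is exactly $N_p$. Applying Lemma \ref{masst} to $Q = [0,1)$, together with the Campbell--Mecke formula on the sending side, gives
\[
\mathbb{E}[N]
\;=\; \mathbb{E}\!\left[\int_{Q} N_p\, dp\right]
\;=\; \mathbb{E}\!\left[\sum_{y \in [\mathcal{B}] \cap Q} 2 M_y\right]
\;=\; 2\lambda_{\mathcal{B}}\, \mathbb{E}^{0}_{\mathcal{B}}[M_0],
\]
where $\mathbb{E}^{0}_{\mathcal{B}}$ denotes expectation under the Palm measure of $\mathcal{B}$ at the origin. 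Hence it suffices to prove $\mathbb{E}^{0}_{\mathcal{B}}[M_0] < \infty$ whenever an infinite component exists a.s.

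For the second step I would use the following consequence of Lemma \ref{cross}: for any edge $e = (a, b)$ with $a < b$, if $e \notin C$ then no edge of $C$ crosses $e$ in the sense of that lemma; combined with Lemma \ref{unbo} and the uniqueness of $C$ (Proposition \ref{unique}), this forces $C$ to have no vertex in the open interval $(a, b)$, so the bi-infinite path $C$ must skip over $[a,b]$ via at least one edge of length strictly greater than $b-a$. Applying this to the longer edge $e_y$ incident to each blue $y$ with $M_y > |y|$, every such desiring blue either belongs to $C$ (contributing a vertex of the bi-infinite path) or has $e_y$ straddled by a strictly longer edge of $C$. A second mass transport that charges each desiring blue to itself when it lies in $C$, and to the straddling $C$-edge otherwise, should then translate into an integrable tail bound on $M_0$ under the Palm measure $\mathbb{P}^{0}_{\mathcal{B}}$.

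The main obstacle is the quantitative version of this last step. A priori several nested desiring far-edges could all be straddled by the same $C$-edge, and several desiring blues in finite components could share a common straddling $C$-edge, so the charging has to be analyzed together with its multiplicity. I expect this to require separating the regimes where the farther neighbor of $y$ lies on the same side of the origin as $y$ (the far edge points away from $0$) versus on the opposite side (the far edge straddles $0$), and exploiting the degree-$2$ rigidity, which makes $C$ a single bi-infinite path and should limit how many straddling $C$-edges can overlap any given interval.
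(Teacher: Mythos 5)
There is a genuine gap, and it sits exactly where you placed your ``main obstacle''. First, your reduction aims at a strictly stronger statement than the proposition: the identity $\mathbb{E}[N]=2\lambda_{\mathcal{B}}\mathbb{E}^{0}_{\mathcal{B}}[M_0]$ is correct, but concluding $N<\infty$ a.s.\ via $\mathbb{E}[N]<\infty$ requires a first-moment bound on the typical edge length, and nothing in the problem suggests this is attainable (edge lengths in one-dimensional stable matchings are typically heavy-tailed; the paper never proves, or needs, more than a.s.\ finiteness). The finite-component contribution alone is problematic for your route: a blue point of a finite component can only be bounded through the gap of the infinite component $C$ that contains it, and summing $M_y$ over the (order gap-length many) such points in a gap produces a size-biased, second-moment-type quantity for the gaps of $C$, over which you have no control. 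By contrast, the paper's argument for finite components is purely a counting one: by Lemma \ref{cross} a finite component sits under a covering edge of $C$, and infinitely many desiring finite-component blues would force infinitely many desiring points of $C$.

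Second, and more fundamentally, your scheme gives no control at all on $M_y$ for desiring blue points $y$ that lie \emph{in} $C$: charging such a point ``to itself'' bounds nothing, and your crossing/straddling observation only constrains edges \emph{outside} $C$. The heart of the paper's proof is precisely the missing structural input for edges of $C$ in the degree-$2$ case: using no-crossing within the infinite component (Lemmas \ref{samba} and \ref{mia}) one shows that a blue point of $C$ is matched no further than its second nearest red point of $C$ in each direction, and that reds and blues of $C$ interleave with multiplicity at most three (Lemma \ref{comp}); feeding this into the qualitative mass-transport Lemma \ref{plop} (which needs no moment assumptions) yields that only finitely many points of $C$ desire the origin a.s. Without an analogue of Lemma \ref{comp}, neither your expectation bound for $y\in C$ nor any a.s.\ substitute is available, so the proposal as it stands does not prove the proposition.
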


In order to prove Proposition \ref{equi}, we need to get a grip on the structure of the components in the two-color stable multi-matching with constant degree 2. This is more involved than in the one-color model, but it turns out that an infinite component must have a relatively simple structure; see Figure 1. The following lemmas are what we need.

\begin{lem}\label{samba}
Consider the two-color stable multi-matching with $\nu(\{2\})=\mu(\{2\})=1$. For a red point $a$ and a blue point $b$, suppose that $a<b$ and that $(a,b)\in[\mathcal{M}]$. Let $c\in (a,b)$ be a blue point such that $(a,c)\not\in[\mathcal{M}]$. If $c$ has a partner $d$ to the right of $b$, then $(b,d)\in[\mathcal{M}]$. Furthermore, the point $c$ can have at most one partner to the right of $b$, and no partners to the left of $a$.
\end{lem}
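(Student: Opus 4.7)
The plan is to reduce all three assertions to stability arguments very similar to those in Lemma \ref{cross}, using in the second and third parts that degree is forced to be exactly 2.

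For the first assertion, I would observe that the edges $(a,b)$ and $(c,d)$ cross: we have $a<c<b<d$ with $a,d$ red and $b,c$ blue, which is the color swap of case (ii) in Lemma \ref{cross}. Repeating that argument gives that $a$ desires $c$ (since $|a-c|<b-a\le M_a$) and that $d$ desires $b$ (since $|d-b|<d-c\le M_d$). I would then split on whether $b-a>d-b$ or $b-a<d-b$. In the first case, $b$ also desires $d$ (as $|b-d|=d-b<b-a\le M_b$), and mutual desire plus stability forces $(b,d)\in[\mathcal{M}]$, which is the desired conclusion. In the second case, $c-a<b-a<d-b<d-c$, so $c$ desires $a$, and together with $a$ desiring $c$, stability forces $(a,c)\in[\mathcal{M}]$, contradicting the hypothesis. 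Hence only the first branch can occur.

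The second assertion then follows by contradiction from the first. If $c$ had two distinct partners $d_1\ne d_2$ both to the right of $b$, applying the first assertion with each of them in the role of $d$ would give $(b,d_1),(b,d_2)\in[\mathcal{M}]$; together with $(a,b)$ this would make $\deg(b)\ge 3$, contradicting $\mu(\{2\})=1$.

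For the third assertion, suppose for contradiction that $c$ has a partner $e<a$; then $e$ is red. The edges $(e,c)$ and $(a,b)$ cross with $e<a<c<b$, and now $a$ desires $c$ (since $|a-c|=c-a<b-a\le M_a$) while $c$ desires $a$ (since $|a-c|=c-a<c-e\le M_c$). Stability again forces $(a,c)\in[\mathcal{M}]$, contradicting the hypothesis. The whole argument is a careful reuse of the Lemma \ref{cross} case analysis combined with the degree-2 budget at $b$; I do not foresee any serious obstacle beyond bookkeeping of colors and of which point lies to the left of which.
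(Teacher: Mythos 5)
Your proposal is correct and follows essentially the same route as the paper: the first claim is the same stability/mutual-desire argument (the paper phrases it as a direct contradiction from ``$b$ does not desire $d$'' using $d-c<c-a$, while you split into the two cases $b-a>d-b$ and $b-a<d-b$, the latter contradicting $(a,c)\not\in[\mathcal{M}]$), and the second and third claims are handled exactly as in the paper via the degree-2 bound at $b$ and mutual desire of $a$ and $c$. No gaps; the implicit use of almost sure non-equidistance to ignore ties is shared with the paper.
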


\begin{proof}
As for the first claim, note that, since $b \in (c,d)$, it follows that $d$ desires $b$, and since the edge $(a,c)\not\in[\mathcal{M}]$, we must have that $c-a>d-c$. Suppose for contradiction that $b$ does not desire $d$. Then $b-a<d-b$, which implies that $d-b<d-c<c-a<b-a$, that is, $b$ desires $d$.

As for the second claim, suppose that $c$ has two partners, say $d$ and $e$, to the right of $b$. The same argument as above shows that $b$ desires both $d$ and $e$. Since $d$ and $e$ also desire $b$, the edges $(b,d)$ and $(b,e)$ must be in $[\mathcal{M}$] and hence $b$ has degree at least 3, a contradiction.

Finally, suppose that $c$ has a partner, say $f$, to the left of $a$. Since $c \in (a,b)$ and $a \in (f,c)$, it follows that $a$ and $c$ desire each other. Hence $(a,c)\in[\mathcal{M}]$.
\end{proof}

\begin{lem}\label{mia}
Consider the two-color stable multi-matching with $\nu(\{2\})=\mu(\{2\})=1$. Suppose that $(a,b)$ and $(c,d)$ in $[\mathcal{M}]$ are in the infinite component and that they cross each other, that is, $a<c<b<d$. Then $(c,b)\in \mathcal{M}$. In particular, $c$ and $b$ must be of different colors.
\end{lem}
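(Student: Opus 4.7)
The plan is to rule out Case (ii) of Lemma \ref{cross} when the crossing edges lie in the infinite component, since Case (i) directly yields $(c,b) \in [\mathcal{M}]$ and hence that $c$ and $b$ have different colors. So assume Case (ii): $a$ blue, $b$ red, $c$ red, $d$ blue. The proof of Lemma \ref{cross} actually produces either $(b,d) \in [\mathcal{M}]$ (when $b-a > d-b$) or $(a,c) \in [\mathcal{M}]$ (when $b-a < d-b$), and by symmetry I focus on the first subcase.

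In this subcase I first note that $(a,c) \notin [\mathcal{M}]$: otherwise the four edges $(a,b), (b,d), (c,d), (a,c)$ would make $\{a,b,c,d\}$ a 4-cycle and thus a complete finite component, contradicting that $a,b,c,d$ lie in the infinite component. Since every vertex has degree exactly $2$, the infinite component is a bi-infinite alternating path; $b$'s neighbors are $a$ and $d$, $d$'s are $b$ and $c$, while $a$ has a second red neighbor $a'$ and $c$ has a second blue neighbor $c'$. Let $c = c^{(0)}, c^{(1)} = c', c^{(2)}, \ldots$ be the vertices obtained by walking along the path away from $d$. The key claim is that $c^{(n)} \in (a,b)$ for every $n \geq 1$, so that this half-path is infinite but confined to the bounded interval $(a,b)$, contradicting the a.s.\ local finiteness of the Poisson process.

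The claim is proved by induction on $n$. For the inductive step, suppose $c^{(n)} \in (a,b)$ and, say, $c^{(n+1)} > b$. Then the edges $(c^{(n)}, c^{(n+1)})$ and $(a,b)$ cross with $a < c^{(n)} < b < c^{(n+1)}$, and Lemma \ref{cross} produces, according to the parity of $n$ (which determines the color of $c^{(n)}$), an edge from the set $\{(c^{(n)}, b),\, (b, c^{(n+1)}),\, (a, c^{(n)})\}$ that must lie in $[\mathcal{M}]$. Since the partner slots at $a$, $b$, $d$ are already fully occupied by $\{b, a'\}$, $\{a, d\}$, $\{c, b\}$ respectively, each of these possibilities either exceeds the degree bound or forces $c^{(n)}$ or $c^{(n+1)}$ to coincide with one of $a'$, $d$, $a$; any such coincidence identifies a vertex visited on the ``$c$-side'' of the path with one visited on the ``$a$-side'', collapsing the bi-infinite path into a finite cycle and again contradicting the infiniteness of the component. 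The case $c^{(n+1)} < a$ is handled identically, and the other subcase $(a,c) \in [\mathcal{M}]$ follows by the analogous argument with the trapped half-path confined to $(c,d)$. The main obstacle is this case analysis: one must keep careful track of the color alternation along the path so that, for each possible Lemma \ref{cross} conclusion, the forced edge is seen either to collide with a known partner list or to collapse the path into a cycle.
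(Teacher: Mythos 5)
Your argument is correct and follows essentially the same route as the paper: after disposing of the different-color case by mutual desire and stability, you rule out the same-color case by trapping an infinite continuation of the component's path inside the bounded interval $(a,b)$, contradicting local finiteness of the point configuration. The only difference is organizational: the paper packages the restrictions on where successive partners can lie into Lemma \ref{samba} and iterates, whereas you re-derive them inline from the forced-edge analysis in the proof of Lemma \ref{cross} together with degree-2 bookkeeping along the bi-infinite path.
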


\begin{proof}
First note that, when $c$ and $b$ are of different colors, it is clear that $c$ desires $b$ and vice versa, implying that they must be matched in $\mathcal{M}$. We are hence done if we can show that it is impossible for $c$ and $b$ to have the same color. To this end, assume that $c$ and $b$ are, say, blue, and note that both edges $(a,c)$ and $(b,d)$ cannot be in $[\mathcal{M}]$, since then the vertices $a,b,c,d$ would form a finite component of their own. So assume that $(a,c) \notin [\mathcal{M}]$. Then, by Lemma \ref{samba}, we must have $(b,d)\in [\mathcal{M}]$. Also by Lemma \ref{samba}, the second partner of $c$ (that is, not $d$) cannot be to the right of $b$ or to the left of $a$. Hence it must be inside $(a,b)$. Write $d_1$ for this second partner of $c$. Note that $d_1$, in turn, cannot have its second partner $d_2$ outside of $(a,b)$. Indeed, $d_2$ cannot be to the left of $a$, since then, by Lemma \ref{mia}, the edge $(d_2,a)$ would be in $[\mathcal{M}]$, which would imply a finite component. Furthermore, $d_2$ cannot be to the left of $b$, since that would mean that $b$ and $d_2$ desire each other and would hence be connected, which would give $b$ degree 3. Iterating this argument leads to infinitely many points inside $(a,b)$, which is impossible. The case when $(a,c)\in[\mathcal{M}]$ and $(b,d)\notin[\mathcal{M}]$ is ruled out analogously.
\end{proof}

Call a point at $x$ a \textbf{bird} if it has one partner on each side of $x$, a \textbf{left-beak} if it has both its partners to the left of $x$ and a \textbf{right-beak} if it has both its partners to the right of $x$. Lemma \ref{mia} implies that, immediately to the right of a right-beak in the infinite component, there must be a left-beak of the opposite color. In between such pairs of consecutive right-beaks/left-beaks, there may be birds; see Figure 1. The following observations are consequences of this structure.

\begin{figure}
\begin{center}
\mbox{\epsfig{file=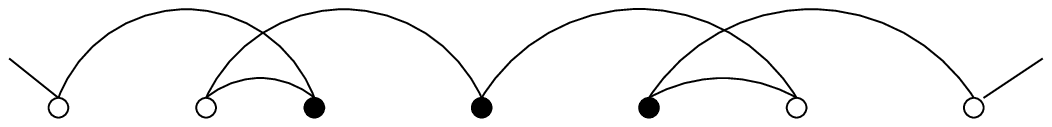}}
\end{center}
\caption{Structure of an infinite component.}
\end{figure}

\begin{lem}\label{comp}
Consider the two-color stable multi-matching with $\nu(\{2\})=\mu(\{2\})=1$ and let $\mathcal{I}$ be the set of points in the infinite component.
\begin{romlist}
\item Any point in $\mathcal{I}\cap[\mathcal{B}]$ is almost surely connected no further away from itself in a given direction than to its second nearest neighbor in $\mathcal{I}\cap[\mathcal{R}]$ in that direction.
\item Between two consecutive points in $\mathcal{I}\cap[\mathcal{B}]$, there are at most three points in $\mathcal{I}\cap[\mathcal{R}]$.
\end{romlist}
\end{lem}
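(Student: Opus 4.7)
My plan is to prove both parts by contradiction, exploiting throughout that $\mathcal{I}$, being an infinite connected 2-regular component unbounded in both directions by Lemma~\ref{unbo}, is necessarily a doubly-infinite alternating-color path, so any induced subgraph of $\mathcal{I}$ is a forest.

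For part (i), fix $b\in\mathcal{I}\cap[\mathcal{B}]$ and, for contradiction, suppose $b$ has a partner $p$ lying strictly beyond $r_2$. Set $V=\mathcal{I}\cap[b,p]$, and partition $V\setminus\{b,p\}$ into $P=\mathcal{I}\cap[\mathcal{R}]\cap(b,p)$ and $Q=\mathcal{I}\cap[\mathcal{B}]\cap(b,p)$, noting $r_1,r_2\in P$ so $|P|\ge 2$. The key step is to apply Lemma~\ref{mia} to the edge $(b,p)$ versus any edge incident to a vertex of $P\cup Q$: for $r\in P$, a partner strictly to the right of $p$ would force the forbidden same-color edge $(r,p)\in\mathcal{M}$, while a partner strictly to the left of $b$ would force $(b,r)\in\mathcal{M}$; an analogous dichotomy holds for $c\in Q$. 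Since $b$ and $p$ each have only one partner slot besides $(b,p)$, at most one $r^*\in P$ and one $c^*\in Q$ can be ``exceptional'' (having a partner outside $V$), in which case that vertex has degree~$1$ in the induced subgraph on $V$. A careful degree count combined with forest positivity forces the induced subgraph to be a single path whose two degree-$1$ endpoints are drawn from $\{b,p,r^*,c^*\}$. Checking the four sub-cases indexed by whether each of $b,p$ sends an interior edge, the induced path has at most four vertices, so $|V|\le 4$; combined with $|V|=|P|+|Q|+2$ and $|P|\ge 2$ this forces $|Q|=0$, which in turn rules out the longer sub-cases and yields a contradiction.

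For part (ii), let $b_1<b_2$ be consecutive blues of $\mathcal{I}$ with $m\ge 4$ reds $s_1<\cdots<s_m$ of $\mathcal{I}$ between them, and let $b_0',b_3'$ denote the blues of $\mathcal{I}$ immediately outside $[b_1,b_2]$. Part (i) applied to $b_1$ (right) and $b_2$ (left) forces $b_1$'s right partners into $\{s_1,s_2\}$ and $b_2$'s left partners into $\{s_{m-1},s_m\}$. The color-symmetric version of (i), whose proof is identical, applied to each $s_i$ forces its left partner into $\{b_0',b_1\}$ and its right partner into $\{b_2,b_3'\}$, so all partners of the $s_i$ lie in $B:=\{b_0',b_1,b_2,b_3'\}$. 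Counting half-edges between $\{s_1,\dots,s_m\}$ and $B$ gives $2m\le 4\cdot 2=8$, hence $m\le 4$. In the borderline case $m=4$ the inequality is tight, so every vertex of $B$ has both partners in $\{s_1,\dots,s_4\}$; applying part (i) to $b_0'$ and $b_3'$ then pins down all eight edges $(b_0',s_1),(b_0',s_2),(b_1,s_1),(b_1,s_2),(b_2,s_3),(b_2,s_4),(b_3',s_3),(b_3',s_4)$, producing a finite $4$-cycle on $\{b_0',s_1,b_1,s_2\}$ as the connected component of $b_1$, contradicting $b_1\in\mathcal{I}$.

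The main obstacle is the fine-grained book-keeping in part (i): one must carefully enumerate the partner configurations for $r\in P$ and $c\in Q$ that are compatible with Lemma~\ref{mia}, the degree-$2$ constraint, and the forest-positivity inequality $|V|-E\ge 1$. I expect this to distill into a single identity of the form $|V|-E=1-\tfrac{1}{2}(\alpha_b(1-\gamma_b)+\alpha_p(1-\gamma_p))$ with $\alpha_\bullet,\gamma_\bullet\in\{0,1\}$, from which the contradiction in (i) is immediate; part (ii) is then a short half-edge count plus the cycle observation.
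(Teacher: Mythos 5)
Your proposal is correct, but it is organized quite differently from the paper's proof. For (i) the paper traces the two half-infinite paths leaving the endpoints of the long edge in the outward directions, notes via Lemma \ref{mia} that neither can re-enter the interval, and gets a contradiction from how the path that must cover the interior blue points exits; you instead classify, again via Lemma \ref{mia}, which edges can leave $V=\mathcal{I}\cap[b,p]$ (at most one red $r^*$ with a partner left of $b$, which is then forced to be matched to $b$, and at most one blue $c^*$ with a partner right of $p$, forced to be matched to $p$) and then run an Euler-type degree count on the induced forest. The identity you anticipate does hold: with $x_b,x_p$ the indicators that $b$, $p$ have their second partner inside $V$ and $e_R,e_B$ the indicators that $r^*$, $c^*$ exist, one gets that the number of components of the induced subgraph is $1+\tfrac{1}{2}\bigl((e_R-x_b)+(e_B-x_p)\bigr)\le 1$, so it is a single path with endpoints in $\{b,p,r^*,c^*\}$; since $r^*$ is adjacent inside $V$ only to $b$ and $c^*$ only to $p$, that path has at most four vertices, contradicting $|P|\ge 2$ exactly as you say. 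For (ii) the paper argues directly from the beak structure given by Lemma \ref{mia} (a red left-beak, a red bird, a red right-beak is the most that fits between consecutive blues) and does not use (i) at all, whereas you deduce (ii) from (i) and its color-symmetric twin by a half-edge count ($2m\le 8$) plus elimination of $m=4$ via the forced $4$-cycle. Your route replaces the paper's ``it is not hard to see'' step by mechanical bookkeeping, at the cost of more notation; the paper's beak picture is shorter and also supplies the structural description reused afterwards. Two small points to patch in a full write-up: actually verify the counting identity (routine, as above), and in the $m=4$ case observe that if there are red points of $\mathcal{I}$ in $(b_0',b_1)$ or $(b_2,b_3')$ then part (i) already contradicts the tightness conclusion that both partners of $b_0'$ (or $b_3'$) lie among the $s_i$, so the clean $4$-cycle configuration is indeed the only remaining sub-case.
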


\begin{proof}
As for (i), assume that a red point $a$ in the infinite component is connected to a blue point $b>a$ and that the interval $(a,b)$ contains at least two other blue points in the infinite component. First note that the path from $a$ in the direction away from $b$ cannot re-enter the interval $(a,b)$ after having left it, since that would produce a configuration that violates Lemma \ref{mia}, and analogously for the path from $b$ in the direction away from $a$. Hence each point in $\mathcal{I}\cap(a,b)$ must be hit by precisely one of these paths before the path leaves $(a,b)$ (which both paths eventually do, since there are only finitely many points in $(a,b)$). But if there are at least two blue points in the interval, at least one of the paths would generate a contradiction with Lemma \ref{mia} when leaving the interval: At most one of the blue points in $(a,b)$ can be a partner of $a$ and, immediately after hitting such a point, the path from $a$ must leave $(a,b)$ in order not to violate Lemma \ref{mia} when it does so. The rest of the blue points in $(a,b)$ must hence be on the path from $b$, and it is not hard to see that this path then yields a contradiction with Lemma \ref{mia} when it leaves $(a,b)$ (if not sooner).

As for (ii), note that, by Lemma \ref{mia}, a red left-beak in the infinite component must be followed immediately to the left by a blue right-beak and, similarly, a red right-beak must have a blue left-beak immediately to its right. In between a red left-beak and a red right-beak there can be a red bird, but no further red point without blue points in between. The maximum possible number of consecutive red points in the infinite component is hence three.
\end{proof}

\indent The next lemma is the last ingredient that we need to prove Proposition \ref{equi}. It is a generalization of \cite{Mia5}[Lemma 2.4].

\begin{lem}\label{plop}
Let $\Gamma$ and $\Lambda$ be two simple point processes on $\mathbb{R}$ with finite intensities, jointly ergodic under translations, and such that, in between two consecutive points from one of the processes, there are at most $k$ points from the other process. For $x\in [\Gamma]$, write $Z_{x}$ for the maximum of the distances from $x$ to the second nearest point of $[\Lambda]$ on the left and the second nearest point of $[\Lambda]$ on the right. The number of points $x\in [\Gamma]$ with $Z_{x}>|x|$ is finite almost surely.
\end{lem}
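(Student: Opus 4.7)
The plan is to reduce the event $\{Z_x>|x|\}$ to a statement about the $\Lambda$-density on an interval of length $|x|$ adjacent to $x$, and then to rule this out for all but finitely many $\Gamma$-points via the ergodic theorem applied to $[\Lambda]$.

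First I would write $Z_x=\max(L_x,R_x)$, where $L_x$ and $R_x$ denote the distances from $x$ to the second-nearest $\Lambda$-point on the left and on the right, respectively. For $x>0$, the condition $L_x>x$ is equivalent to saying that the second $\Lambda$-point to the left of $x$ lies strictly to the left of $0$, i.e.\ $|[\Lambda]\cap[0,x)|\le 1$; similarly $R_x>x$ iff $|[\Lambda]\cap(x,2x]|\le 1$. Analogous equivalences hold for $x<0$. Hence $\{Z_x>|x|\}$ forces one of the two intervals of length $|x|$ flanking $x$ to contain at most one $\Lambda$-point, and if infinitely many $\Gamma$-points satisfied $Z_x>|x|$, a pigeonhole argument over the four cases (side $\times$ sign of $x$) would yield a sequence $x_n\in[\Gamma]$ with $|x_n|\to\infty$ along which, without loss of generality, $|[\Lambda]\cap(x_n,2x_n]|\le 1$ for every $n$.

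Next I would invoke joint ergodicity: $[\Lambda]$ is ergodic with deterministic finite intensity $\lambda_\Lambda$, so $|[\Lambda]\cap[0,t]|/t\to\lambda_\Lambda$ a.s.\ as $|t|\to\infty$. For $x_n\to+\infty$,
\[
\frac{|[\Lambda]\cap(x_n,2x_n]|}{x_n}=\frac{|[\Lambda]\cap[0,2x_n]|}{x_n}-\frac{|[\Lambda]\cap[0,x_n]|}{x_n}\longrightarrow 2\lambda_\Lambda-\lambda_\Lambda=\lambda_\Lambda,
\]
and analogous identities handle the other three cases. Provided $\lambda_\Lambda>0$, the left-hand side tends to infinity, contradicting the bound $\le 1$. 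Combined with local finiteness of $[\Gamma]$, this gives that a.s.\ only finitely many $\Gamma$-points satisfy $Z_x>|x|$. If instead $\lambda_\Lambda=0$, the interleaving hypothesis (at most $k$ $\Gamma$-points between consecutive $\Lambda$-points) forces $\lambda_\Gamma\le k\lambda_\Lambda=0$, so $[\Gamma]=\emptyset$ a.s.\ and the claim is vacuous. I foresee no serious obstacle; the only mildly technical step is transferring the ergodic statement from intervals $[0,t]$ to the moving intervals $(x_n,2x_n]$, which the display above accomplishes by subtraction.
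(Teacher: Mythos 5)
Your argument is correct, but it takes a genuinely different route from the paper's. You turn $\{Z_x>|x|\}$ into the statement that one of the two length-$|x|$ intervals flanking $x$ contains at most one $\Lambda$-point, and then rule out infinitely many such points via the pointwise ergodic theorem for $\Lambda$ (joint ergodicity makes the factor $\Lambda$ ergodic, so $|[\Lambda]\cap[0,t]|/t\to\lambda_\Lambda$ a.s., and your subtraction identity legitimately transfers this limit to the moving windows along any sequence $x_n\to\infty$). The paper instead uses the mass transport principle (Lemma \ref{masst}): each $\Gamma$-point sends out, for each of its two intervals up to the second-nearest $\Lambda$-point, mass $1/(4k)$ times that interval's length, spread uniformly over a threefold enlargement of the interval; the interleaving hypothesis guarantees that any location is covered by at most $4k$ sending intervals, so the unit interval emits mass at most $1$, whereas infinitely many points with $Z_x>|x|$ would make it receive infinite mass. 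The trade-off: the paper's proof needs only joint translation invariance but relies essentially on the bound $k$; yours needs ergodicity and $\lambda_\Lambda>0$ but never uses $k$ in the main step. The one soft spot is your dismissal of the case $\lambda_\Lambda=0$: if $\Lambda$ is a.s.\ empty, the hypothesis ``at most $k$ $\Gamma$-points between consecutive $\Lambda$-points'' is vacuous, so the inequality $\lambda_\Gamma\le k\lambda_\Lambda$ does not actually follow; however, this degenerate situation is equally implicit in the paper's proof (which presumes two $\Lambda$-points on each side of every $\Gamma$-point) and never arises in the application, where $\Lambda$ is infinite. Both sets of hypotheses are available in the lemma, so either route is legitimate; yours is arguably more elementary, while the paper's is more robust in that it dispenses with ergodicity.
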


\begin{proof}
Consider the mass transport in which, for each $\Gamma$-point $x$, each of the intervals $(x,y)$ and $(y',x)$ between $x$ and the second nearest $\Lambda$-points $y$ and $y'$ to the right and left, respectively, sends out mass equal to $1/4k$ times its length. Note that any interval on $\mathbb{R}$ is covered by at most $4k$ such intervals -- $2k$ coming from each direction -- and hence the total mass sent out by any interval is dominated by its length. The mass is distributed uniformly over the intervals $(x-(y-x), y+(y-x))$ and $(y'-(y'-x),x+(y'-x))$, respectively. If there were infinitely many $x\in[\Gamma]$ with $Z_x>|x|$, then the unit-interval would receive infinite mass. This contradicts the mass transport principle, since the mass sent out by the unit-interval is dominated by 1.
\end{proof}

\begin{proof}[Proof of Proposition \ref{equi}]
Combining Lemma \ref{comp} and Lemma \ref{plop} -- with $\Lambda$ and $\Gamma$ in Lemma \ref{plop} corresponding to red and blue points, respectively -- yields that the number of blue points in the infinite component that desire the origin is finite almost surely. As for the finite components, it follows from Lemma \ref{cross} that edges of such components cannot cross edges of the infinite component. Hence a finite component must be contained between two consecutive points in the infinite component. Consider the longest edge in the infinite component that covers a given finite component and note that, if the finite component contains vertices that desire the origin, then at least one end-point of this covering edge must also desire the origin. Furthermore, clearly each covering edge can enclose only finitely many finite components. Hence, if with positive probability there are infinitely many finite components that contain blue vertices that desire the origin, then with positive probability there are infinitely many vertices in the infinite component that desire the origin. By symmetry, this means that there is a positive probability of having infinitely many blue points in the infinite component desiring the origin, which is a contradiction.
\end{proof}

\section{Proof of Theorem \ref{noinf}}

In this section we show that $N=\infty$ almost surely for the two-color stable multi-matching with any degree distribution(s) (satisfying (\ref{des})). Together with Proposition \ref{equi}, this proves Theorem \ref{noinf}. 

\begin{prop}\label{viola}
In the two-color stable multi-matching, for any degree distributions, we have $N=\infty$ almost surely.
\end{prop}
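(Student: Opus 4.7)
The plan is a proof by contradiction combining Kolmogorov's law of the iterated logarithm for the stub imbalance with the mass transport principle (Lemma \ref{masst}). Suppose $\mathbb{P}(N<\infty)>0$; since the joint law of $(\mathcal{R},\mathcal{B},\mathcal{M})$ is ergodic under the translations of $\mathbb{R}$, we may promote this to $N<\infty$ almost surely. A quick Palm computation via Campbell's formula shows
\[
\mathbb{E}[N]=\lambda_{\mathcal{B}}\int_{\mathbb{R}}\mathbb{P}^{0}_{\mathcal{B}}(M_0>|y|)\,dy=2\lambda_{\mathcal{B}}\mathbb{E}^{0}_{\mathcal{B}}[M_0],
\]
so the heart of the matter is to force $\mathbb{E}^{0}_{\mathcal{B}}[M_0]=\infty$ from the stub fluctuations of the two Poisson processes, and then upgrade this to $N=\infty$ almost surely.

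The stub-counting step goes as follows. Define
\[
\mathcal{S}(t)=\sum_{r\in\mathcal{R}\cap[0,t]}X_r-\sum_{b\in\mathcal{B}\cap[0,t]}Y_b,
\]
which by (\ref{des}) has mean zero; under a finite-second-moment assumption on $\nu$ and $\mu$ (to which one reduces by truncation), the LIL gives $|\mathcal{S}(t)|\geq c\sqrt{t\log\log t}$ almost surely along a random subsequence $t_n\to\infty$. If $k$ is the number of matches internal to $[0,t]$, the number of edges with exactly one endpoint in $[0,t]$ is $(\sigma(t)-k)+(\tau(t)-k)\geq|\mathcal{S}(t)|$, and each such edge crosses either the point $0$ or the point $t$. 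Writing $|C_s|$ for the number of edges crossing a point $s\in\mathbb{R}$, this gives the almost-sure inequality $|C_0|+|C_t|\geq|\mathcal{S}(t)|$. Every edge crossing the origin has a blue endpoint $y$ whose longest edge length exceeds $|y|$, so $y\in N$; combining this with translation invariance of the law of $|C_s|$ already yields $\mathbb{E}[N]=\infty$.

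The delicate step, which I expect to be the main obstacle, is converting this expectation-level divergence into the almost-sure statement. The idea is to apply Lemma \ref{masst} to a transport tailored to the hypothesis $N<\infty$ almost surely (which, by the symmetry red/blue of the stub-counting argument, also forces the red analogue $N^R<\infty$). A natural choice: each edge $(y,z)\in[\mathcal{M}]$ sends unit mass from its midpoint to its blue endpoint $y$, but only if $|y-z|>|y|$ (equivalently, $y$ desires the origin). The expected mass sent out of the unit interval $Q=[0,1)$ is bounded by $\lambda_{\mathcal{B}}\mathbb{E}[Y]$, whereas, using the $\sqrt{t}$-lower bound on $|C_0|+|C_t|$ and unrolling via translation invariance, the expected mass received by $Q$ must be infinite -- contradicting Lemma \ref{masst}. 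The subtlety lies in setting up the transport so that the random-subsequence fluctuations of $\mathcal{S}(t_n)$ concentrate into a divergent expected inflow at a \emph{fixed} unit interval, rather than being dispersed over moving windows; the mass transport principle is precisely the right tool for this upgrade because it exchanges integrals over $\mathbb{R}$ for expectations over $Q$.
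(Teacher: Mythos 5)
Your stub-counting step captures the right intuition and is essentially sound as far as it goes: since a.s.\ every stub is matched, $|C_0|+|C_t|\geq|\mathcal{S}(t)|$, and taking expectations against the CLT-scale fluctuations of $\mathcal{S}(t)$ does yield $\mathbb{E}|C_0|=\infty$ (at least for finite-variance degrees; the parenthetical ``one reduces by truncation'' is not justified, because truncating the degrees changes the stable multi-matching in an uncontrolled way, and (\ref{des}) even allows infinite means, which the proposition must cover). But there is a genuine gap exactly at the step you flag: nothing in the proposal converts $\mathbb{E}[N]=\infty$, or divergence of $|C_{t_n}|$ along \emph{random} times $t_n$, into $N=\infty$ almost surely -- and for a stationary model these are strictly weaker statements, since $\mathbb{E}[N]=\infty$ is perfectly compatible with $N<\infty$ a.s. The specific transport you sketch is not even admissible: the rule ``send mass only if $|y-z|>|y|$'' refers to the fixed origin, so the resulting measure on $\mathbb{R}\times\mathbb{R}$ is not invariant in law under diagonal translations and Lemma \ref{masst} does not apply to it; if you drop that condition, the expected inflow to $Q$ equals the expected outflow (both are $\lambda_{\mathcal{B}}\mathbb{E}[Y]$) and no contradiction appears. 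A smaller but real issue: the event $\{N<\infty\}$ is not translation invariant, so ergodicity does not directly promote $\mathbb{P}(N<\infty)>0$ to an almost-sure statement; handling this requires the kind of argument in \cite{Mia5}.

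The paper closes this gap by a completely different mechanism, and the needed tool is already in the text: the monotonicity Lemma \ref{beauty}. One adds $k$ independent uniform red points in $(-1,1)$; by an adaptation of \cite{Yuval1}[Lemma 18(i)] the law of the augmented configuration is absolutely continuous with respect to the original one, and by \cite{LL}[Proposition 1] all $k$ added points are matched in the augmented stable multi-matching $m'$. Lemma \ref{beauty} then says that the blue partners of the added points were matched at least as far away in the original matching $m$, so each of them desires a point of $(-1,1)$; hence $\widetilde{N}\geq k$ almost surely for every $k$, and $\widetilde{N}=\infty$ a.s.\ transfers to $N=\infty$ a.s.\ as in \cite{Mia5}[Theorem 1.3(i)]. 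Your fluctuation heuristic correctly explains why the two-color model should have many long edges, but to prove the proposition you would need either to invoke Lemma \ref{beauty} in this point-insertion/absolute-continuity way, or to exhibit a concrete translation-covariant transport that produces the contradiction -- which the proposal does not do.
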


\begin{proof}[Proof of Theorem \ref{noinf}]
By ergodicity, the event that there exists an infinite component has probability 0 or 1. Combining Proposition \ref{equi} and Proposition \ref{viola} rules out the latter option.
\end{proof}

To prove Proposition \ref{viola}, we need a multi-matching analogue of the monotonicity property proved in \cite{Yuval1}[Lemma 17] for the two-color matching, that is, for the case with $\nu(\{1\})=\mu(\{1\})=1$. More specifically, the following lemma states that adding an extra blue point makes the multi-matching no worse for red points.

\begin{lem}[Monotonicity for the two-color stable multi-matching.]\label{beauty}
Let $U,V,\{w\}\subset \mathbb{R}^d$ be discrete disjoint sets, such that $U\cup V\cup\{w\}$ is non-equidistant and has no descending chains. For any given degree distribution(s), let $m$ be the partial two-color stable multi-matching between $U$ and $V$, and $m'$ the partial two-color stable multi-matching between $U\cup\{w\}$ and $V$. Furthermore, for $x\in U\cup V\cup\{w\}$, write $m_k(x)$ and $m'_k(x)$ for the $k$:th nearest partner of $x$ in $m$ and $m'$, respectively, with $m_k(x)=\infty$ ($m'_k(x)=\infty$) if $x$ has fewer than $k$ partners in $m$ ($m'$). Then
$$
|v-m'_{k}(v)|\leq |v-m_k(v)| \textrm{ for all $k$ and all } v\in V.
$$
\end{lem}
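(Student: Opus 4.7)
The plan is to adapt the alternating-chain ``improvement'' argument used for the one-matching case in \cite{Yuval1}[Lemma 17] to the multi-matching setting. The core idea is that a violation of the monotonicity inequality should force a chain of edges alternating between $m\setminus m'$ and $m'\setminus m$ with strictly decreasing lengths; such a chain cannot close on itself and can only terminate at the extra vertex $w$, producing a violation of the stability of $m$ or $m'$ at the terminal edge.

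I would begin by recording degree-accounting facts in the symmetric difference $m\triangle m'$. Since $m$ and $m'$ are both stable partial multi-matchings realizing the same prescribed degrees, and $U\cup V\cup\{w\}$ has no descending chains, one can check (for instance from the convergence of the mutually-closest iterative construction) that every $u\in U$ and every $v\in V$ has the same number of unmatched stubs in $m$ as in $m'$; equivalently, the count of $m$-partners equals the count of $m'$-partners at each vertex except $w$, whose $m$-degree is $0$. Second, for every edge $(x,y)\in m'\setminus m$ stability of $m$ dictates that $|x-y|$ is at least the longest $m$-partner distance of one of its endpoints, and the analogous dichotomy for $m\setminus m'$ follows from stability of $m'$.

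Suppose now for contradiction that $|v-m'_k(v)|>|v-m_k(v)|$ for some $v\in V$ and some $k$, and choose a violation minimizing $\ell:=|v-m_k(v)|$. Setting $v_0:=v$ and $u_0:=m_k(v_0)$, the edge $(v_0,u_0)$ lies in $m\setminus m'$ and has length exactly $\ell$, while $v_0$ has strictly fewer $m'$-partners than $m$-partners at distance $\le\ell$. Degree-balance at $u_0$ yields some $v_1\in V$ with $(u_0,v_1)\in m'\setminus m$, and the dichotomy above combined with the presence of $(v_0,u_0)\in m$ at $u_0$ allows me to choose $v_1$ with $|u_0-v_1|<\ell$. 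Minimality of $\ell$ applied at $v_1$ then produces an edge $(v_1,u_1)\in m\setminus m'$ of length strictly less than $\ell$, and iterating yields an alternating walk $v_0,u_0,v_1,u_1,\dots$ whose edge lengths strictly decrease. Such a walk cannot repeat an edge and cannot close into a cycle, so it is finite; degree-balance at every vertex other than $w$ forces it to terminate at $w$. Applying stability of $m'$ to the terminal $m'\setminus m$ edge incident to $w$, together with what has been established along the chain, produces the final contradiction.

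The main obstacle is the bookkeeping required to control the chain in the multi-matching regime. In the one-matching case of \cite{Yuval1}[Lemma 17] each vertex has at most one partner and is visited at most once along the chain, making the decreasing-length step automatic; here a vertex may appear several times and ``desires'' is defined with respect to the currently longest partner rather than a unique one, so at each step one must argue that among the several candidate successors $v_j$ there is at least one connected by a strictly shorter edge. The non-equidistance and no-descending-chain assumptions ensure that the chain is well-defined and finite, while the minimality of $\ell$ is what rules out chain configurations for which the strict-decrease step would fail. A secondary but non-trivial technicality is verifying the degree-balance claim at vertices of $U\cup V$ in the partial-matching setting before invoking it in the chain construction.
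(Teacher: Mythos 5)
Your overall instinct---an alternating chain of $m$- and $m'$-edges with strictly decreasing lengths, controlled by non-equidistance and the no-descending-chains hypothesis---is the right one, and it is essentially the paper's mechanism. But two of the pillars you lean on are flawed. First, the ``degree-balance'' claim (every point of $U\cup V$ has the same number of unmatched stubs in $m$ as in $m'$) is false in general for arbitrary discrete sets: adding $w$ can give a previously unmatched $V$-point a partner and strip a $U$-point of one (e.g.\ with all degrees $1$, $U=\{0\}$, $V=\{1,3\}$, $w=2.5$). The paper never needs such a global statement; instead, at each chain vertex it gets saturation \emph{locally} from stability (if $x$ fails to desire a point, then $x$ has used all its prescribed stubs and all its edges are shorter than the relevant distance) and then a pigeonhole with the prescribed degree, which is the same in $m$ and $m'$, produces the next edge of the chain at strictly smaller length. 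This also replaces your appeal to ``minimality of $\ell$'' at $v_1$, which as written does not produce the needed $m\setminus m'$ edge (and the global minimizing violation need not even be attained; relatedly, $m_k(v)$ for a violating $k$ need not lie outside $m'$ unless you take the minimal violating $k$ at that $v$ and invoke non-equidistance --- the paper sidesteps all of this by taking $u_1$ to be the closest $m$-partner of $v$ not connected to $v$ in $m'$).

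Second, the endgame is wrong. In this chain every $U$-side vertex is reached as an $m$-partner of the preceding $V$-point, so it lies in $U$ and the chain can never reach $w$ at all; there is no ``terminal $m'\setminus m$ edge incident to $w$'' to apply stability to. The correct conclusion is the opposite of termination: since each step succeeds unconditionally (by the stability-plus-pigeonhole argument, and distinctness of the points follows from the decreasing lengths), the chain is infinite, has strictly decreasing consecutive distances, and hence is a descending chain in $U\cup V\cup\{w\}$ --- contradicting the hypothesis directly. So the role of the no-descending-chains assumption is to forbid the infinite chain you have constructed, not to force a finite chain that ends at $w$. With the degree-balance lemma removed, the local steps justified by stability and pigeonhole, and the contradiction taken from the descending chain itself, your sketch becomes the paper's proof.
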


\begin{proof}[Proof of Lemma \ref{beauty}:]
Assume for contradiction that there is a $v\in V$ such that $|v-m'_{k}(v)|>|v-m_k(v)|$ for some $k$. This means that $v$ has at least one partner in $m$ and that at least one of these $m$-partners is not connected to $v$ in $m'$. Let $u_1$ be the closest one of the $m$-partners that $v$ is no longer connected to in $m'$. Then $v$ desires $u_1$ in $m'$, since indeed all edges of $v$ in $m'$ cannot be shorter than the distance to $u_1$. Since $v$ and $u_1$ are not connected in $m'$, it follows that $u_1$ does not desire $v$ in $m'$, that is, all edges of $u_1$ in $m'$ are shorter than the distance to $v$. Furthermore, $u_1$ has at least one partner in $m'$ that $u_1$ is not connected to in $m$ (since $u_1$ is connected to $v$ in $m$ but not in $m'$). Let $v_1$ be the closest $m'$-partner of $u_1$ that $u_1$ is not connected to in $m$. Then $|v-u_1|>|u_1-v_1|$. Since $v$ is a partner of $u_1$ in $m$, we have that $u_1$ desires $v_1$ in $m$ and, since $u_1$ and $v_1$ are not connected in $m$, it follows that $v_1$ does not desire $u_1$ in $m$. Hence all, edges of $v_1$ in $m$ are shorter than the distance to $u_1$. Furthermore, $v_1$ has at least one partner in $m$ that $v_1$ is not connected to in $m'$ (since $v_1$ is connected to $u_1$ in $m'$ but not in $m$). Let $u_2$ be the closest $m$-partner of $v_1$ that $v_1$ is not connected to in $m'$. Then $|u_1-v_1|>|v_1-u_2|$.

Iterating the above gives a sequence $v,u_1,v_1,u_2,v_2,\ldots$, such that $|v-u_1|>\ldots>|u_i-v_i|>|v_i-u_{i+1}|>\ldots$. Note that we must have $u_i\neq u_j$ for $j>i$, since $u_i$ has an edge to $v_i$ in $m'$, while all edges of $u_j$ in $m'$ are shorter than the distance to $v_{j-1}$, where $|u_j-v_{j-1}|<|u_i-v_i|$. Similarly $v_i\neq v_j$ for $j>i$. Hence $v,u_1,v_1,u_2,v_2,\ldots$ constitute a descending chain and we have arrived at a contradiction.
\end{proof}

\begin{proof}[Proof of Proposition \ref{viola}]
We have to show that the number of blue points that desire the origin is infinite almost surely. To this end, define $\widetilde{H}:=\{  x\in[\mathcal{B}]: M_{x}>|x|-1  \}$, that is, $\widetilde{H}$ is the set of blue points that desire some point in the interval $(-1,1)$, and write $\widetilde{N}$ for the cardinality of $\widetilde{H}$. The same argument as in \cite{Mia5}[Theorem 1.3(i)] applies to show that $\widetilde{N}=\infty$ a.s. implies that $N=\infty$ a.s. Hence we are done if we show that $\widetilde{N}=\infty$ a.s. The proof of this is analogue to the proof of \cite{Yuval1}[Theorem 6(i)] for two-color stable matchings:

Fix any $k<\infty$; we will prove that $\mathbb{P}(\widetilde{N}\geq k)=1$. To this end, let $(\mathcal{R}',\eta_{\mathcal{R}'})$ be obtained from $(\mathcal{R},\eta_{\mathcal{R}})$ by adding $k$ independent uniformly random red points in $(-1,1)$. By a straightforward adaptation of \cite{Yuval1}[Lemma 18(i)], and independence between $(\mathcal{R},\eta_{\mathcal{R}})$ and $(\mathcal{B},\eta_{\mathcal{B}})$, we have that the law of $(\mathcal{R}',\eta_{\mathcal{R}'},\mathcal{B},\eta_{\mathcal{B}} )$ is absolutely continuous with respect to the law of $(\mathcal{R},\eta_{\mathcal{R}},\mathcal{B},\eta_{\mathcal{B}})$. By \cite{LL}[Proposition 1], almost surely all $k$ added red points are matched in the two-color stable multi-matching $m'$ between $(\mathcal{R}',\eta_{\mathcal{R}'})$ and $(\mathcal{B},\eta_{\mathcal{B}})$. Let $m$ be the two-color stable multi-matching between $(\mathcal{R},\eta_{\mathcal{R}})$ and $(\mathcal{B},\eta_{\mathcal{B}} )$. By Lemma \ref{beauty}, the blue partners of the $k$ added red points were matched at least as far away in $m$, so these blue partners lie in $\widetilde{H}$. Thus $\widetilde{N}\geq k$, as required.
\end{proof}

\section{Simulations and further work}

\begin{table}\centering
\small{
\begin{tabular}{|c|c|c|c|c|}
\hline
\multicolumn{4}{|c|}{Number of points of each color}\\\hline
\hline
Degree &5,000 &20,000 & 50,000  \\\hline
2 & .0047 $\pm$ .0008 & .0013 $\pm$ .0017 & .0007 $\pm$ .0001  \\\hline
3 & .0716 $\pm$ .0226 & .0280 $\pm$ .0035  & .0068 $\pm$ .0066  \\\hline
4 & .5628 $\pm$ .1754 & .4124 $\pm$ .1517 & .2897 $\pm$ .1190 \\\hline
5 & .9706 $\pm$.0563 & .8920 $\pm$ .0956 & .9001 $\pm$ .1470 \\\hline
2 and 3 & .0532 $\pm$  .01809  & .0209 $\pm$ .0060 & .0109 $\pm$ .0030 \\\hline
3 and 4 & .7794 $\pm$ .1549 & .4319 $\pm$ .208& .32953 $\pm$ .1700  \\\hline
Poisson(2)+1 & .4029 $\pm$ .1505 & .2158 $\pm$ .1010 & .1668 $\pm$ .0686 \\\hline
Poisson(2)+2 & .7626 $\pm$ .2471 & .7286 $\pm$ .1949 & .7612 $\pm$ .1273  \\\hline
\end{tabular}
}
\caption{Simulation results for the two-color stable multi-matching of uniformly
random points on the cycle. The proportion of points in the largest
connected component is indicated as ``sample mean $\pm$ sample standard
deviation'' for a sample of size $10$. When there are two possible degrees, each of them has probability 0.5.}\label{sim}
\end{table}

As we have shown, there is no infinite component in the two-color stable multi-matching with constant degree 2. Is this the case also for other degree distributions? Indeed, we know that $N=\infty$ for all degree distributions. However, the method employed to show that existence of an infinite component implies that $N<\infty$ relies heavily on the fact that the structure of an infinite component can be characterized in the degree 2 case. This does not seem doable for other degree distributions, indicating that another approach is probably neccessary in the general case. The result that $N=\infty$ if all components are finite almost surely, proved for the one-color model in \cite{Mia5}, seems possible to extend to the two-color model, but does not seem to help in answering the above question.

Table 1 contains simulation results for some other degree distributions. (We have chosen to generate a fixed number of points of each color uniformly distributed on a finite cycle, which means that the points do not quite consitute two independent Poisson processes. Fixing the total number of points and then randomizing their colours would give a finite representation of our process where the points indeed constitute independent Poisson processes, but it would mean that all stubs are not matched and thus lead to difficulties in defining the largest connected component.) As for deterministic degrees, the relative size of the largest connected component seems to go to 0 for degree 3, suggesting that there is no infinite component. For constant degree 4, there is also a clearly decreasing trend, while for constant degree 5 it is difficult to draw any conclusions -- the relative size of the largest component is still large in the largest simulation, but the standard deviation is also large compared to the smaller simulations. Is it the case that there is never percolation for deterministic degrees in $d=1$? If so, does this remain true also when there is randomness in the degrees? Indeed, for a mixture of degrees 2 and 3 and of degrees 3 and 4, and for Poisson(2)+1 degrees, the relative size of the largest component seems to decrease, while for Poisson(2)+2 degrees we cannot see this in simulations of the current size.

Another question concerns the so called random direction stable multi-matching, which is obtained by assigning random directions to the stubs at the Poisson points and then do a stable matching of the stubs subject to the restriction that a given stub is to be matched in its prescribed direction. In \cite{Mia5}, it is shown that the one-color version of this model with constant degree 2 almost surely does not generate an infinite component. Is this true also in the two-color case? The two-color version of the model seems more difficult to analyze, since the component structure becomes more involved.

\end{document}